\newtheorem{theorem}{Theorem}[section]
\newtheorem{lemma}[theorem]{Lemma}
\theoremstyle{definition}
\newtheorem{definition}[theorem]{Definition}
\newtheorem{example}[theorem]{Example}
\newtheorem{remark}[theorem]{Remark}
\numberwithin{equation}{section}
\title[A Neural Network]{A Neural Network for Solving Inverse Quasi-Variational Inequalities}
\author[Soumitra Dey]{Soumitra Dey$^*$}\thanks{$^*$Corresponding author.}
\address[S. Dey]{Department of Mathematics, The Technion -- Israel Institute of Technology, 3200003 Haifa, Israel}
\email{\tt deysoumitra2012@gmail.com}
\author{Simeon Reich}
\address[S. Reich]{Department of Mathematics, The Technion -- Israel Institute of Technology, 3200003 Haifa, Israel}
\email{\tt sreich@technion.ac.il}
\keywords{Convergence, Global stability, Inverse quasi-variational inequality, Monotone mapping, Neural network, Projection}
\subjclass[2020]{Primary 92B20, 58E35, 47J20; Secondary 68T05, 82C32, 90C33}
\begin{document}

\begin{abstract}
We study the existence and uniqueness of solutions to the inverse quasi-variational inequality problem. Motivated by the neural network approach to solving optimization problems such as variational inequality, monotone inclusion, and inverse variational problems, we consider a neural network associated with the inverse quasi-variational inequality problem, and establish the existence and uniqueness of a solution to the proposed network. We prove that every trajectory of the proposed neural network converges to the unique solution of the inverse quasi-variational inequality problem and that the network is globally asymptotically stable at its equilibrium point. We also prove that if the function which governs the inverse quasi-variational inequality problem is strongly monotone and Lipschitz continuous, then the network is globally exponentially stable at its equilibrium point. We discretize the network and show that the sequence generated by the discretization of the network converges strongly to a solution of the inverse quasi-variational inequality problem under certain assumptions on the parameters involved. Finally, we provide numerical examples to support and illustrate our theoretical results.
\end{abstract}

\maketitle


\section{Introduction}
\label{Sec:1}

Let $\mathbb{R}^n$ denote the real $n$-dimensional Euclidean space with the usual inner product and norm $\langle\cdot,\cdot\rangle$ and $\|\cdot\|$, respectively. That is, given
$x=(x_1,\cdots,x_n)^\top$ and $y=(y_1,\cdots,y_n)^\top$ in $\mathbb{R}^n$ (here $^\top$ stands for the transpose), we have
$$\langle x,y\rangle=\sum_{i=1}^n x_iy_i \quad {\rm and}\quad
\|x\|=\left(\sum_{i=1}^n x_i^2\right)^{\frac12}.$$

Inverse variational inequalities were introduced by He et al. \cite{BHE2010}.
Recall that an {\em inverse variational inequality} (IVI) calls for finding a point $x^*\in \mathbb{R}^n$ such that
\begin{equation}\label{IVIP}
f(x^*)\in {\Omega} \quad{\rm and}\quad \left\langle x^*,y-f(x^*) \right\rangle\geq 0\quad\forall y\in {\Omega},
\end{equation}
where $f$ is a mapping from $\mathbb{R}^n$ into itself,
and $\Omega$ is a nonempty, closed and convex subset of $\mathbb{R}^n$.
IVIs have found application in various fields, such as traffic network problems \cite{DAUS2013} and economic equilibrium problems \cite{YHAN2017}.

If an inverse function $x=f^{-1}(u)=F(u)$ exists, then
the above IVI problem can be transformed into the following regular variational inequality: find a point $u^*\in \Omega$ such that
\begin{align}
\left\langle F(u^*), v-u^*\right\rangle\geq 0 \; \quad\forall v\in\Omega.
\end{align}
However, in real life, it is common that even if it exists, the inverse function $f^{-1}$ does not have an explicit form. For instance, in network equilibrium problems, the state variables $u$ represent an equilibrium flow pattern under the given controls $x$. It is almost impossible to find an explicit functional relationship between $x$ and $u$ (see \cite{XHE2011}). From the above discussion, it follows that the inverse variational inequality problem is in some sense a particular case of the variational inequality problem (VIP). The VIP is an important tool for solving many optimization problems such as systems of linear equations, systems of nonlinear equations and complementarity problems (see, for instance, \cite{DKIN1980}). After the variational inequality problem was introduced by Stampacchia \cite{GSTA1964}, many researchers have focused their attention on it. It has also been generalized in various directions (see, for example, \cite{YCEN2010, QLDO2017, FFAC2003}) in both finite and infinite dimensional settings. Moreover, many  analytical as well as numerical methods have been introduced for solving the VIP
(see, for instance, \cite{MDNO2003, YCEN2012, HYAM2001, YCEN2011, YCEN22011, BSHE1997, QLDO2019 }).

One of the most important generalizations of the variational inequality problem is the quasi-variational inequality problem.
Let $f:\mathbb{R}^n\rightarrow\mathbb{R}^n$ be a single-valued mapping and let $\Phi:\mathbb{R}^n\rightarrow2^{\mathbb{R}^n}$ be a set-valued mapping.
Then the {\em quasi-variational inequality problem} (QVIP) consists of finding a point $x^*\in\Phi(x^*)$ such that
\begin{align}\label{QVIP:P}
\left\langle f(x^*), y-x^*\right\rangle\geq 0 \; \quad\forall y\in\Phi(x^*).
\end{align}

It is clear that if $\Phi(x)=\Omega$ for every $x\in\mathbb{R}^n$, then the above QVIP (\ref{QVIP:P}) reduces to the regular variational inequality problem \cite{SDEY2019}.  There are several analytical as well as numerical approaches that have been applied by many researchers for solving the above QVIP (see, for instance, \cite{NMIJ2019, YNES2011}  and references therein).


It is not difficult to see \cite{XZOU2016} that IVI (\ref{IVIP}) is equivalent to the following projected equation:
\begin{align}
f(x^*)=P_{\Omega}(f(x^*)-\alpha x^*),
\end{align}
where $\alpha>0$ is any fixed constant and $P_{\Omega}: \mathbb{R}^n \rightarrow \Omega$ is the nearest point projection from $\mathbb{R}^n$
onto $\Omega$ defined by
$$P_{\Omega}(x) := \arg\min_{y\in \Omega} \|x-y\| \; \quad x\in \mathbb{R}^n.$$

Iterative methods for solving IVIs can be found, for example, in \cite{XHE2011, XHU2012}
and references therein. But so far, there is a limited number of numerical algorithms for solving IVIs.

In 2019 Zou et al. \cite{XZOU2016} proposed a novel neural network method for solving IVI (\ref{IVIP}) by
considering the following neural network:
\begin{equation}\label{NN}
\dot{x}=\lambda\left\lbrace P_{\Omega}(f(x)-\alpha x)-f(x)\right\rbrace=S(x),
\end{equation}
where $\dot{x}=\frac{dx}{dt}$ and $\lambda>0$ is a fixed parameter.

It is evident that $x^*$ is a solution of IVI (\ref{IVIP}) if and only if it is an equilibrium point
of the neural network (\ref{NN}), that is, the constant curve $x(t)\equiv x^*$ is a trajectory
of the dynamical system (\ref{NN}); in other words, $S(x^*)=0$.

However, there is a limited number of analytical as well as numerical methods for solving IVIs. Still many researchers have paid attention to them because of their applications in various fields. They also generalized them in various ways (see, for instance, \cite{DAUS2013, YHAN2017, RHU2016}). One of the important generalizations of IVIs is known as the inverse quasi-variational inequality problem (see \cite{DAUS2013, YHAN2017}). An {\em inverse quasi-variational inequality problem} $\text{(IQVIP)}$ consists of finding a point $x^*\in\mathbb{R}^n$ such that
\begin{align}\label{IQVIP:P}
f(x^*)\in \Phi(x^*)\text{ and }\left\langle x^*,y-f(x^*)\right\rangle\geq 0\quad\forall y\in \Phi(x^*),
\end{align}
where $\Phi:\mathbb{R}^n\rightarrow 2^{\mathbb{R}^n}$ is a set-valued map and $f:\mathbb{R}^n\rightarrow \mathbb{R}^n$ is a single-valued mapping.

If $\Phi(x)=\Omega$ for all $x\in\mathbb{R}^n$, then the  IQVIP (\ref{IQVIP:P}) reduces to the IVI (\ref{IVIP}).  That is, (\ref{IQVIP:P}) reduces to the problems of finding a point $x^*\in\mathbb{R}^n$ such that
\begin{align}
f(x^*)\in\Omega\quad\text{and}\quad\left\langle x^*, y-f(x^*)\right\rangle\geq 0 \; \quad\forall y\in\Omega.
\end{align}

Aussel et al. \cite{DAUS2013} studied the IQVIP with an application to road pricing problems and Han et al. \cite{YHAN2017} established the existence of a solution to the inverse quasi-variational inequality problem by using a fixed point theorem and the Fan-Knaster-Kuratowski-Mazurkiewicz (KKM) Lemma.  Subsequently, the existence result of Han et al. \cite{YHAN2017} was extended by Dey et al. \cite{SDEY2018}. More recently, the inverse quasi-variational inequality problem has been revisited and generalized by many researchers, and vector-valued IQVIPs have also been introduced (see \cite{SSCH2021, ZBWA2019}) and references therein.

In the case where the set-valued mapping $\Phi:\mathbb{R}^n\rightarrow 2^{\mathbb{R}^n}$ which governs the IQVIP (\ref{IQVIP:P}) has nonempty, closed and convex point values, it is not difficult to check that $x^*$ is a solution to $(\ref{IQVIP:P})$ if and only if it is a solution to the projection equation
\begin{align}
f(x)=P_{\Phi(x)}(f(x)-\alpha x),
\end{align}
where $\alpha>0$ is a fixed constant.

Recently, neural networks have found very effective applications in signal processing, pattern recognition, associative memory, as well as in other engineering or scientific fields \cite{HGZH2010, QSLI2013, HZHA2011}. Neural network methods have also been applied to solving mathematical programming and related optimization problems such as variational inequalities \cite{XBGA2005}, quasi-variational inequalities \cite{LVNG2019, NMIJ2019}, and variational inclusions \cite{SDEY2019}.
Motivated by this state of affairs, in the present paper we propose the following neural network for solving the IQVIP $(\ref{IQVIP:P})$:
\begin{equation}\label{IQVIP}
\dot{x}=\lambda(t)\left\lbrace P_{\Phi(x)}(f(x)-\alpha x)-f(x)\right\rbrace=S(x, t),
\end{equation}
where $\dot{x}=\frac{dx}{dt}$ and $\lambda(t)>0$, $t\geq 0$, are parameters.

In particular, if $\lambda(t)=\lambda$ for every $t\in [0, \infty)$ and $\Phi(x)=\Omega$ for every $x\in\mathbb{R}^n$, then the network $(\ref{IQVIP})$ reduces to the network $(\ref{NN})$, which was considered and studied in (\cite{XZOU2016, HKXU2020}).

Our paper is organized as follows. In section \ref{Sec:2}, we recall some basic definitions and results for further use. In section \ref{Sec:3}, we study the existence and uniqueness of a solution to IQVIP $(\ref{IQVIP:P})$. In section \ref{Sec:4},  we provide the existence and uniqueness of a solution to the network $(\ref{IQVIP})$. In Section \ref{Sec:5}, we study the global stability of the neural network $(\ref{IQVIP})$ including global asymptotic stability and global exponential stability. In section \ref{Sec:6}, we discretize the network (\ref{IQVIP}) and establish the convergence of the sequence generated by its discretization. In section \ref{Sec:7}, we provide a numerical example to illustrate the effectiveness of the neural network in solving IQVIP $(\ref{IQVIP:P})$. Finally, in section \ref{Sec:8}, we draw a conclusion from our analysis.


\section{Preliminaries}
\label{Sec:2}

In this section we collect some basic definitions and results which are used in the sequel.
For more details one can refer to \cite{FFAC2003, HHBA2011, NMIJ2019, PBEE1975, XZOU2016, MANR1985}.

\begin{definition}
A function $f:\mathbb{R}^n\rightarrow\mathbb{R}^n$ is said to be {\em Lipschitz} continuous with constant $L$ on $\mathbb{R}^n$
if, for every pair of points $x, y\in\mathbb{R}^n$, we have
\begin{align*}
\|f(x)-f(y)\|\leq L\|x-y\|.
\end{align*}
\end{definition}

\begin{definition}
A function $f:\mathbb{R}^n\rightarrow\mathbb{R}^n$ is said to be {\em nonexpansive} if it is Lipschitz continuous with constant equal to one,
that is,
$$\|f(x)-f(y)\|\le \|x-y\| \; \quad \forall x,y\in \mathbb{R}^n.$$
\end{definition}

\begin{definition}
A function $f: \mathbb{R}^n\rightarrow\mathbb{R}^n$ is said to be monotone
if
\begin{align*}
\langle f(x)-f(y), x-y\rangle\geq 0 \; \quad  \forall x,y\in \mathbb{R}^n.
\end{align*}
\end{definition}

\begin{definition}
A function $f: \mathbb{R}^n\rightarrow\mathbb{R}^n$ is said to be {\em $\beta$-strongly monotone}
if, for some $\beta>0$, we have
\begin{align*}
\langle f(x)-f(y), x-y\rangle\geq\beta\|x-y\|^2 \quad \forall x,y\in \mathbb{R}^n.
\end{align*}
\end{definition}

\noindent Note that strongly monotone mappings are monotone, but the converse is not true.
\begin{remark}\label{rem:1}
Note that if a function $f: \mathbb{R}^n\rightarrow\mathbb{R}^n$ is $L$-Lipschitz and $\beta$-strongly monotone,
then we must have $L\ge \beta$.  Indeed,
$$\beta\|x-y\|^2\le\langle x-y, f(x)-f(y)\rangle\le\|x-y\|\cdot\|f(x)-f(y)\|\le L\|x-y\|^2.$$
\end{remark}

The following characterization of the nearest point projection is very well known.

\begin{lemma}\label{plem}
Let $\Omega$ be a nonempty, closed and convex subset of $\mathbb{R}^n$. Given $x\in \mathbb{R}^n$ and $z\in \Omega$, we have
\begin{align}
z=P_{\Omega}(x)\quad\Longleftrightarrow \quad \left\langle x-z,y-z\right\rangle\leq 0 \; \quad \forall y\in \Omega.
\end{align}
It follows that the projection operator $P_{\Omega}$ is nonexpansive.
\end{lemma}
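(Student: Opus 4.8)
The plan is to prove the two implications of the stated equivalence separately and then deduce nonexpansiveness as a short corollary. Throughout, I would use that $P_\Omega(x)$ is genuinely well defined: since $\Omega$ is nonempty, closed and convex and the map $y\mapsto\|x-y\|^2$ is continuous, strictly convex and coercive, it attains its infimum over $\Omega$ at exactly one point. Existence follows by intersecting $\Omega$ with a closed ball large enough to meet it and invoking compactness in $\mathbb{R}^n$; uniqueness follows from strict convexity (equivalently, from the parallelogram identity, which shows two distinct minimizers would force their midpoint to have strictly smaller distance).

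For the forward implication I would assume $z=P_\Omega(x)$, fix an arbitrary $y\in\Omega$, and use convexity of $\Omega$: the point $z_t:=z+t(y-z)=(1-t)z+ty$ lies in $\Omega$ for every $t\in[0,1]$. Since $z$ minimizes the distance to $x$ over $\Omega$, the inequality $\|x-z_t\|^2\ge\|x-z\|^2$ expands to $-2t\langle x-z,y-z\rangle+t^2\|y-z\|^2\ge 0$, hence $\langle x-z,y-z\rangle\le \frac{t}{2}\|y-z\|^2$ for every $t\in(0,1]$. Letting $t\to 0^+$ yields $\langle x-z,y-z\rangle\le 0$, which is the asserted variational inequality.

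For the reverse implication I would assume $z\in\Omega$ satisfies $\langle x-z,y-z\rangle\le 0$ for all $y\in\Omega$, and for an arbitrary $y\in\Omega$ expand $\|x-y\|^2=\|x-z\|^2-2\langle x-z,y-z\rangle+\|y-z\|^2\ge\|x-z\|^2$, where the inequality combines the hypothesis with $\|y-z\|^2\ge 0$. Thus $z$ realizes the minimal distance from $x$ to $\Omega$, and by uniqueness $z=P_\Omega(x)$.

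Finally, for nonexpansiveness, given $x_1,x_2\in\mathbb{R}^n$ with $z_i=P_\Omega(x_i)$, I would apply the just-proved characterization twice: test the inequality for $z_1$ at $y=z_2$ and the one for $z_2$ at $y=z_1$, then add the two. This gives $\|z_1-z_2\|^2\le\langle x_1-x_2,z_1-z_2\rangle$, and the Cauchy--Schwarz inequality then yields $\|z_1-z_2\|\le\|x_1-x_2\|$. I do not expect a real obstacle here; the only points needing care are the well-definedness of $P_\Omega$, on which the whole statement rests, and the passage $t\to 0^+$ in the forward direction, which is precisely where convexity of $\Omega$ enters.
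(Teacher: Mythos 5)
Your proof is correct and complete: the well-definedness argument (compactness for existence, strict convexity for uniqueness), the perturbation $z_t=(1-t)z+ty$ with $t\to 0^+$ for the forward direction, the quadratic expansion for the converse, and the two-fold application plus addition for nonexpansiveness are all sound. The paper itself gives no proof of this lemma, citing it as well known; your argument is the standard one, and your nonexpansiveness step (adding the two variational inequalities to get $\|z_1-z_2\|^2\le\langle x_1-x_2,z_1-z_2\rangle$) is exactly the computation the paper reuses later in the proof of its Lemma~\ref{Result:1}.
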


\begin{lemma}
Let $\Phi:\mathbb{R}^n\rightarrow 2^{\mathbb{R}^n}$ be a set-valued mapping with $\Phi(x)=f(x)+\Omega$ for every $x\in\mathbb{R}^n$, where $f:\mathbb{R}^n\rightarrow\mathbb{R}^n$ is a single-valued mapping and $\Omega$ is a nonempty closed convex subset of $\mathbb{R}^n$. Then
\begin{align}\label{plem2}
P_{f(x)+\Omega}(z)=f(x)+P_\Omega(z-f(x))\quad\forall x,z\in\mathbb{R}^n.
\end{align}
\end{lemma}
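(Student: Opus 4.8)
The plan is to reduce everything to the variational characterization of the projection recorded in Lemma \ref{plem}. First I would observe that the set $f(x)+\Omega$ is a translate of the nonempty, closed and convex set $\Omega$, hence is itself nonempty, closed and convex, so that $P_{f(x)+\Omega}$ is well defined and Lemma \ref{plem} is applicable to it.

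Next, fix $x,z\in\mathbb{R}^n$ and set $q:=P_\Omega(z-f(x))\in\Omega$, so that $p:=f(x)+q$ belongs to $f(x)+\Omega$. By Lemma \ref{plem} applied to the set $\Omega$, the point $q$ is characterized by
\begin{align*}
\left\langle (z-f(x))-q,\ u-q\right\rangle\leq 0\qquad\forall u\in\Omega.
\end{align*}
Now take an arbitrary $y\in f(x)+\Omega$ and write it as $y=f(x)+u$ with $u\in\Omega$. Then $z-p=(z-f(x))-q$ and $y-p=u-q$, so the displayed inequality becomes exactly $\left\langle z-p,\ y-p\right\rangle\leq 0$ for every $y\in f(x)+\Omega$. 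Applying the converse direction of Lemma \ref{plem}, now to the set $f(x)+\Omega$, we conclude that $p=P_{f(x)+\Omega}(z)$, which is precisely \eqref{plem2}.

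An equally short alternative, which I would mention as a remark, is to argue directly from the metric definition of the projection: for $y=f(x)+u$ with $u\in\Omega$ one has $\|z-y\|=\|(z-f(x))-u\|$, and minimizing the right-hand side over $u\in\Omega$ gives the unique minimizer $u=P_\Omega(z-f(x))$, hence the unique minimizer $y=f(x)+P_\Omega(z-f(x))$. I do not expect any genuine obstacle here; the only point requiring a word of care is the preliminary remark that the translate $f(x)+\Omega$ inherits nonemptiness, closedness and convexity from $\Omega$, which legitimizes the use of Lemma \ref{plem}.
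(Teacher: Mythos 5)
Your proof is correct. The paper states this lemma without supplying a proof, and your argument via the variational characterization in Lemma \ref{plem} (checking that $p=f(x)+P_\Omega(z-f(x))$ satisfies $\langle z-p,\,y-p\rangle\le 0$ for all $y\in f(x)+\Omega$ after the change of variable $y=f(x)+u$) is exactly the standard way to justify \eqref{plem2}; the remark that the translate $f(x)+\Omega$ is again nonempty, closed and convex, and the alternative argument from the metric definition of the projection, are both correct as well.
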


\begin{definition}
Let $(X, \|\cdot\|)$ be a normed linear space. A mapping $f:X\rightarrow X$ is said to be a strict contraction if for some $\alpha\in [0, 1)$, we have
\begin{align*}
\|f(x)-f(y)\|\leq\alpha\|x-y\| \; \quad\forall x, y\in X.
\end{align*}
\end{definition}

For any given map $f:X\rightarrow X$, the set of all the fixed points of $f$ is denoted by $\text{Fix}(f)$. That is, $\text{Fix}(f) := \left\lbrace x\in X: f(x)=x\right\rbrace.$

\begin{theorem}$\text{(Banach's fixed point theorem)}$\label{Banach:Principle}
Let $X$ be a Banach space and let $f:X\rightarrow X$ be a strict contraction. Then $f$ has a unique fixed point.
\end{theorem}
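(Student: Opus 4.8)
The plan is to prove uniqueness and existence separately, with uniqueness immediate and existence proceeding via Picard iteration. For uniqueness, suppose $x^*$ and $y^*$ are both fixed points of $f$; then $\|x^*-y^*\|=\|f(x^*)-f(y^*)\|\le\alpha\|x^*-y^*\|$, and since $\alpha\in[0,1)$ this forces $(1-\alpha)\|x^*-y^*\|\le 0$, hence $x^*=y^*$.

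For existence, I would fix an arbitrary starting point $x_0\in X$ and define the iterates $x_{n+1}:=f(x_n)$ for $n\ge 0$. First I would establish the geometric decay of consecutive differences: by induction on $n$, using the contraction property repeatedly, $\|x_{n+1}-x_n\|\le\alpha^n\|x_1-x_0\|$. Then, for any $m>n$, the triangle inequality together with this estimate and the summation of a geometric series gives $\|x_m-x_n\|\le\sum_{k=n}^{m-1}\|x_{k+1}-x_k\|\le\sum_{k=n}^{m-1}\alpha^k\|x_1-x_0\|\le\frac{\alpha^n}{1-\alpha}\|x_1-x_0\|$. Since $0\le\alpha<1$, the right-hand side tends to $0$ as $n\to\infty$, uniformly in $m$, so $(x_n)$ is a Cauchy sequence.

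Because $X$ is a Banach space, $(x_n)$ converges to some limit $x^*\in X$. Since $f$ is a strict contraction it is in particular Lipschitz continuous, hence continuous, so passing to the limit in $x_{n+1}=f(x_n)$ yields $x^*=\lim_{n\to\infty}x_{n+1}=\lim_{n\to\infty}f(x_n)=f(x^*)$. Thus $x^*$ is a fixed point, and combined with the uniqueness argument above it is the unique fixed point of $f$.

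The only genuinely technical point, and it is a mild one, is the Cauchy estimate: one must bound $\|x_m-x_n\|$ uniformly in $m$ by a tail of the convergent geometric series $\sum_k\alpha^k$ rather than attempting to argue convergence of the sequence directly. Everything else --- uniqueness, the continuity of $f$, and the use of completeness --- is routine.
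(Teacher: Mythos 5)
Your proof is correct and complete: it is the standard Picard iteration argument (uniqueness from the contraction inequality, existence via geometric decay of consecutive differences, the Cauchy estimate, completeness of $X$, and continuity of $f$ to pass to the limit). The paper states Banach's fixed point theorem as a classical result without proof, so there is nothing to compare against; your argument is exactly the canonical one and needs no changes.
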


Recall that, roughly speaking, a dynamical system is a system in which a function describes the time dependence of a point in a given space. At any given time, a dynamical system has a state given by a tuple of real numbers (a vector) that can be represented by a point in an appropriate state space. The study of dynamical systems is the focus of dynamical system theory, which has applications to a wide variety of fields such as mathematics, physics, biology, chemistry, engineering, economics and medicine.

More precisely, a dynamical system is a manifold $S$ called the phase (or state) space endowed with a family of smooth evolution functions $m^t$ that for any element $t\in T$, the time, map a point of the phase space back into the phase space. The notion of smoothness changes with applications and the type of the manifold. There are several choices for the set $T$. When $T$ is taken to be the reals, then the dynamical system is called a flow; and if T is restricted to the non-negative reals, then the dynamical system is said to be a semi-flow. When $T$ is taken to be the integers, it is called a cascade; and the restriction to the non-negative integers is said to be a semi-cascade.

The evolution function $m^t$ is often the solution of a differential equation of motion:
\begin{align}\label{ds}
\dot{x}=h(x).
\end{align}
This equation involves the time derivative, represented by the dot, of a trajectory $x(t)$ on the phase space starting at some point $x_0$. The vector field $h(x)$ is a smooth function that at every point of the phase space $S$ provides the velocity vector of the dynamical system at that point. These vectors are not vectors in the phase space $S$, but in the tangent space $T_xS$ of the point $x$. Given a smooth $m^t$, an autonomous vector field can be derived from it.

The differential equations determining the evolution function $m^t$ are often ordinary differential equations; in this case the phase space $S$ is a finite dimensional manifold. Many of the concepts in dynamical systems can be extended to certain infinite dimensional manifolds -- those that are locally Banach spaces -- in which case the differential equations are partial differential equations. In the late $20th$ century the dynamical system perspective of partial differential equations started gaining popularity.

The concept of a dynamical system has its origins in Newtonian mechanics. There, as in other natural sciences and engineering disciplines, the evolution rule of dynamical systems is an implicit relation that gives the state of the system for only a short time into the future. To determine the state for all future times requires iterating the relation many times -- each iteration advancing time a small step. We refer to such an iterative procedure as solving the system or integrating it. If the system can be solved, then given an initial point, it is possible to determine all its future positions. This collection of points is known as a trajectory or an orbit

%

Recall that a point $x_e\in\mathbb{R}^n$
is said to be an equilibrium point of the dynamical system (\ref{ds}) if $h(x_e)=0$.
In other words, the constant curve $x(t)\equiv x_e$ is a trajectory.

\begin{definition}
Let $x_e\in\mathbb{R}^n$ be an equilibrium point of the dynamical system (\ref{ds}).
\begin{itemize}
\item
The dynamical system (\ref{ds}) is said to be
globally asymptotically stable at $x_e$ if
every trajectory $x=x(t)$ of (\ref{ds}) converges to $x_e$ as $t\to\infty$.
(This implies that $x_e$ is the unique equilibrium point).

\item
The dynamical system (\ref{ds}) is said to be globally exponentially stable at $x_e$ if every trajectory $x=x(t)$ starting
at any initial point $x(t_0)\in \mathbb{R}^n$ with $t_0\ge 0$ satisfies the estimate
\begin{align*}
\|x(t)-x_e\|\le \eta e^{-\xi(t-t_0)}\quad\forall t\ge t_0,
\end{align*}
where $\eta>0$ and $\xi>0$ are positive constants.
\end{itemize}
\end{definition}


\section{Existence and uniqueness of a solution to the IQVIP}
\label{Sec:3}

In this section we establish the existence and uniqueness of the solution to the inverse quasi-variational inequality problem (IQVIP).
We begin with the following lemma.

\begin{lemma}\label{Result:1}
Let $\Phi:\mathbb{R}^n\rightarrow 2^{\mathbb{R}^n}$ be a  set-valued mapping with nonempty, closed and convex point values. Then we have
\begin{align}\label{Proj:lem1}
& \|u-P_{\Phi(x)}(u)-(v-P_{\Phi(y)}(v))\|\\&\leq\|u-v\|+\|P_{\Phi(x)}(v)-P_{\Phi(y)}(v)\| \; \quad\forall x, y, u, v\in\mathbb{R}^n\nonumber.
\end{align}

\begin{proof}
Let $x, y, u, v\in\mathbb{R}^n$. Using Lemma \ref{plem}, we see that
\begin{align}\label{Lem1:in1}
\left\langle v-P_{\Phi(x)}(v), P_{\Phi(x)}(u)-P_{\Phi(x)}(v)\right\rangle\leq 0
\end{align}
and
\begin{align}\label{Lem1:in2}
\left\langle u-P_{\Phi(x)}(u), P_{\Phi(x)}(v)-P_{\Phi(x)}(u)\right\rangle\leq 0.
\end{align}

Adding $(\ref{Lem1:in1})$ and $(\ref{Lem1:in2})$, we get
\begin{align}\label{Lem1:in3}
\|P_{\Phi(x)}(u)-P_{\Phi(x)}(v)\|^2\leq\left\langle P_{\Phi(x)}(u)-P_{\Phi(x)}(v),u-v\right\rangle.
\end{align}

It follows from $(\ref{Lem1:in3})$ that
\begin{align}\label{Lem1:in4}
&\|u-P_{\Phi(x)}(u)-(v-P_{\Phi(x)}(v))\|^2\\&  \leq\|u-v\|^2+\|P_{\Phi(x)}(u)-P_{\Phi(x)}(v)\|^2-2\left\langle P_{\Phi(x)}(u)-P_{\Phi(x)}(v),u-v\right\rangle\nonumber
\\& \leq\|u-v\|^2-\|P_{\Phi(x)}(u)-P_{\Phi(x)}(v)\|^2\nonumber\\&
\leq \|u-v\|^2\nonumber
\end{align}

and so,

\begin{align}\label{Lem:in5}
& \|u-P_{\Phi(x)}(u)-(v-P_{\Phi(y)}(v))\| \\& \leq\|u-P_{\Phi(x)}(u)-(v-P_{\Phi(x)}(v))\|+\|P_{\Phi(x)}(v)-P_{\Phi(y)}(v)\|\nonumber\\&
\leq \|u-v\| +\|P_{\Phi(x)}(v)-P_{\Phi(y)}(v)\|.\nonumber
\end{align}

\end{proof}
\end{lemma}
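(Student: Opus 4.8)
The plan is to estimate $\|u-P_{\Phi(x)}(u)-(v-P_{\Phi(y)}(v))\|$ by inserting the intermediate term $v-P_{\Phi(x)}(v)$, so that the triangle inequality splits the difference into two pieces: one comparing $u$ with $v$ at the \emph{same} set $\Phi(x)$, and one measuring the discrepancy between the two projections $P_{\Phi(x)}(v)$ and $P_{\Phi(y)}(v)$ of the \emph{same} point $v$. The second piece is already in the form required by the right-hand side of $(\ref{Proj:lem1})$, so the real work is to bound the first piece by $\|u-v\|$, i.e. to show that the map $u\mapsto u-P_{\Phi(x)}(u)$ (the ``complementary projection'' $I-P_{\Phi(x)}$) is nonexpansive for fixed $x$.

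First I would record the two variational characterizations coming from Lemma \ref{plem}: since $P_{\Phi(x)}(u),P_{\Phi(x)}(v)\in\Phi(x)$, applying the characterization at the point $u$ with test vector $P_{\Phi(x)}(v)$, and at the point $v$ with test vector $P_{\Phi(x)}(u)$, yields the two inequalities $(\ref{Lem1:in1})$ and $(\ref{Lem1:in2})$. Adding them gives the ``firm nonexpansiveness'' type estimate $(\ref{Lem1:in3})$, namely $\|P_{\Phi(x)}(u)-P_{\Phi(x)}(v)\|^2\le\langle P_{\Phi(x)}(u)-P_{\Phi(x)}(v),u-v\rangle$. Next I would expand the square $\|u-P_{\Phi(x)}(u)-(v-P_{\Phi(x)}(v))\|^2$ as $\|u-v\|^2+\|P_{\Phi(x)}(u)-P_{\Phi(x)}(v)\|^2-2\langle P_{\Phi(x)}(u)-P_{\Phi(x)}(v),u-v\rangle$ and substitute $(\ref{Lem1:in3})$ to cancel: the cross term dominates twice the projection-difference term, leaving $\le\|u-v\|^2-\|P_{\Phi(x)}(u)-P_{\Phi(x)}(v)\|^2\le\|u-v\|^2$. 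Taking square roots gives $\|u-P_{\Phi(x)}(u)-(v-P_{\Phi(x)}(v))\|\le\|u-v\|$.

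Finally I would assemble the pieces via the triangle inequality:
\begin{align*}
\|u-P_{\Phi(x)}(u)-(v-P_{\Phi(y)}(v))\|
&\le \|u-P_{\Phi(x)}(u)-(v-P_{\Phi(x)}(v))\|+\|P_{\Phi(x)}(v)-P_{\Phi(y)}(v)\|\\
&\le \|u-v\|+\|P_{\Phi(x)}(v)-P_{\Phi(y)}(v)\|,
\end{align*}
which is exactly $(\ref{Proj:lem1})$. There is no serious obstacle here; the only point requiring a little care is the algebraic manipulation of the squared norm in the step analogous to $(\ref{Lem1:in4})$, where one must correctly track the signs so that $(\ref{Lem1:in3})$ produces the cancellation, and then remember that all quantities involved are nonnegative so the final square root is legitimate. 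Everything else is a routine application of Lemma \ref{plem} and the triangle inequality.
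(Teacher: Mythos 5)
Your proposal is correct and follows essentially the same route as the paper's proof: the firm nonexpansiveness estimate obtained by adding the two variational characterizations, the expansion of the square to show that $I-P_{\Phi(x)}$ is nonexpansive, and the final triangle inequality through the intermediate term $v-P_{\Phi(x)}(v)$. No gaps.
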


\begin{theorem}\label{Thm:IQVIP}
Let $\Phi:\mathbb{R}^n\rightarrow2^{\mathbb{R}^n}$ be a set-valued mapping with nonempty, closed and convex point values. Assume that $f:\mathbb{R}^n\rightarrow\mathbb{R}^n$ is $L-$Lipschitz continuous and $\beta-$strongly monotone. Assume further that there exists some $\kappa>0$ such that
\begin{align}\label{Lem:in1}
\|P_{\Phi(x)}(z)-P_{\Phi(y)}(z)\| \leq\kappa\|x-y\| \; \quad\forall x, y, z\in\mathbb{R}^n
\end{align}
and $L^2-2\alpha(\beta-\kappa)<\kappa^2$, where $\alpha>0$ is a constant.
Then the inverse quasi-variational inequality problem $(\ref{IQVIP:P})$ has a unique solution.
\end{theorem}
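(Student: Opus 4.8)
The plan is to recast the IQVIP as a fixed-point problem on the Banach space $\mathbb{R}^n$ and invoke Banach's fixed point theorem (Theorem \ref{Banach:Principle}). As noted just before the statement, since each $\Phi(x)$ is nonempty, closed and convex, a point $x^*$ solves $(\ref{IQVIP:P})$ if and only if $f(x^*)=P_{\Phi(x^*)}(f(x^*)-\alpha x^*)$. Subtracting $\alpha x^*$ from both sides of this projection equation, one sees that it is equivalent to $(f(x^*)-\alpha x^*)-P_{\Phi(x^*)}(f(x^*)-\alpha x^*)=-\alpha x^*$, i.e. to $x^*=T(x^*)$, where
\[
T(x):=-\frac{1}{\alpha}\Big[\big(f(x)-\alpha x\big)-P_{\Phi(x)}\big(f(x)-\alpha x\big)\Big],\qquad x\in\mathbb{R}^n .
\]
Thus it suffices to prove that $T$ is a strict contraction, and the theorem will follow since $\mathbb{R}^n$ is complete.

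To estimate $\|T(x)-T(y)\|$ I would apply Lemma \ref{Result:1} with $u=f(x)-\alpha x$ and $v=f(y)-\alpha y$, which gives
\[
\alpha\,\|T(x)-T(y)\|\le\big\|(f(x)-f(y))-\alpha(x-y)\big\|+\big\|P_{\Phi(x)}(f(y)-\alpha y)-P_{\Phi(y)}(f(y)-\alpha y)\big\| .
\]
The second term is at most $\kappa\|x-y\|$ by hypothesis $(\ref{Lem:in1})$ (taking $z=f(y)-\alpha y$). For the first term, expanding the square and using that $f$ is $\beta$-strongly monotone and $L$-Lipschitz yields
\[
\big\|(f(x)-f(y))-\alpha(x-y)\big\|^2=\|f(x)-f(y)\|^2-2\alpha\langle f(x)-f(y),x-y\rangle+\alpha^2\|x-y\|^2\le\big(L^2-2\alpha\beta+\alpha^2\big)\|x-y\|^2,
\]
where $L^2-2\alpha\beta+\alpha^2=(\alpha-\beta)^2+L^2-\beta^2\ge 0$ is legitimate by Remark \ref{rem:1}. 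Combining the two bounds and dividing by $\alpha$ gives $\|T(x)-T(y)\|\le\theta\,\|x-y\|$ with $\theta=\dfrac{1}{\alpha}\big(\sqrt{L^2-2\alpha\beta+\alpha^2}+\kappa\big)$.

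It then remains to check $\theta<1$, i.e. $\sqrt{L^2-2\alpha\beta+\alpha^2}<\alpha-\kappa$. A short computation shows that, given $\alpha>\kappa$, this is equivalent (after squaring) to $L^2-2\alpha\beta+\alpha^2<(\alpha-\kappa)^2$, which rearranges to exactly the stated hypothesis $L^2-2\alpha(\beta-\kappa)<\kappa^2$; moreover, combining this hypothesis with $L\ge\beta$ forces $\beta^2\le L^2<\kappa^2+2\alpha(\beta-\kappa)$, hence $(\beta-\kappa)(\beta+\kappa-2\alpha)<0$, which together with the contraction requirement pins down $\beta>\kappa$ and $\alpha>\tfrac{1}{2}(\beta+\kappa)>\kappa$. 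With $\theta<1$ in hand, Banach's fixed point theorem yields a unique fixed point of $T$, hence a unique solution of the IQVIP.

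I do not expect a deep obstacle here; the argument is essentially a calibrated application of Banach's theorem. The two places demanding care are the choice of the auxiliary map — the prefactor $-1/\alpha$ and the precise combination $(f(x)-\alpha x)-P_{\Phi(x)}(f(x)-\alpha x)$ are exactly what make Lemma \ref{Result:1} applicable with the right residual $-\alpha x^*$ — and the bookkeeping that collapses the contraction estimate into the hypothesis $L^2-2\alpha(\beta-\kappa)<\kappa^2$ (including the verification that $\alpha>\kappa$). Everything else is routine.
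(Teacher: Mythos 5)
Your proof is correct and follows essentially the same route as the paper: your map $T$ is algebraically identical to the paper's auxiliary map $h(x)=x-\frac{1}{\alpha}f(x)+\frac{1}{\alpha}P_{\Phi(x)}(f(x)-\alpha x)$, and you apply Lemma \ref{Result:1} together with the Lipschitz/strong-monotonicity bound on $\|(f(x)-\alpha x)-(f(y)-\alpha y)\|$ in exactly the same way to obtain the contraction constant $\bigl(\sqrt{L^2-2\alpha\beta+\alpha^2}+\kappa\bigr)/\alpha$ and then invoke Banach's fixed point theorem. Your explicit check that the hypothesis $L^2-2\alpha(\beta-\kappa)<\kappa^2$ forces this constant below $1$ (including the need for $\alpha>\kappa$) is, if anything, more careful than the paper, which simply asserts that the contraction property ``clearly follows'' from the assumptions.
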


\begin{proof}
Define a mapping $h:\mathbb{R}^n\rightarrow\mathbb{R}^n$ by
\begin{align*}
h(x)=x-\frac{1}{\alpha}f(x)+\frac{1}{\alpha}P_{\Phi(x)}(f(x)-\alpha x),
\end{align*}
where $\alpha>0$ is a fixed constant.

It is clear that $x^*$ is a solution to the inverse quasi-variational inequality if and only if $x^*$ is a fixed point of the mapping $h$.

Let $\bar{x}=f(x)-\alpha x$ and $\bar{y}=f(y)-\alpha y$. Using Lemma \ref{Result:1}, we see that
{\small
\begin{align}\label{in1}
&\|h(x)-h(y)\|\\& =\|x-\frac{1}{\alpha}f(x)+\frac{1}{\alpha}P_{\Phi(x)}(f(x)-\alpha x)-(y-\frac{1}{\alpha}f(y)+\frac{1}{\alpha}P_{\Phi(y)}(f(y)-\alpha y))\|\nonumber\\&
=\frac{1}{\alpha}\|\bar{y}-\bar{x}+P_{\Phi(x)}(\bar{x})-P_{\Phi(y)}(\bar{y})\|\nonumber\\&
=\frac{1}{\alpha}\|\bar{x}-P_{\Phi(x)}(\bar{x})-(\bar{y}-P_{\Phi(y)}(\bar{y}))\|\nonumber\\&
\leq\frac{1}{\alpha}(\|\bar{x}-\bar{y}\|+\|P_{\Phi(x)}(\bar{y})-P_{\Phi(y)}(\bar{y})\|)\nonumber\\&
\leq\frac{1}{\alpha}(\|\bar{x}-\bar{y}\|+\kappa\|x-y\|).\nonumber
\end{align}}

Now,
\begin{align}\label{in2}
\|\bar{x}-\bar{y}\|^2& =\|f(x)-\alpha x-(f(y)-\alpha y)\|^2\\&
=\|f(x)-f(y)\|^2-2\alpha\left\langle f(x)-f(y), x-y\right\rangle+\alpha^2\|x-y\|^2\nonumber\\&
\leq (L^2-2\beta\alpha+\alpha^2)\|x-y\|^2. \nonumber
\end{align}

Using $(\ref{in1})$ and $(\ref{in2})$ we get,
\begin{align}
\|h(x)-h(y)\|\leq \frac{1}{\alpha}(\sqrt{(L^2-2\beta\alpha+\alpha^2)}+\kappa)\|x-y\|.
\end{align}

It clearly follows from our assumptions that $h$ is a strict contraction with constant $(\sqrt{(L^2-2\beta\alpha+\alpha^2)}+\kappa)/\alpha\in[0,1)$. Therefore, by the Banach contraction principle (Theorem \ref{Banach:Principle}), the mapping $h$ has a unique fixed point. In other words, the inverse quasi-variational inequality problem (\ref{IQVIP:P}) has a unique solution.
\end{proof}

\begin{remark}
Note that assumption $(\ref{Lem:in1})$ is a kind of contraction property for the set-valued mapping $\Phi$ on $\mathbb{R}^n$.
In several applications the point image can be written as
\begin{align*}
\Phi(x)=s(x)+\Omega,
\end{align*}
where $s(x)$ is a Lipschitz continuous single-valued mapping from $\mathbb{R}^n$ into itself with Lipschitz constant $\lambda$ and $\Omega$ is a closed convex subset of $\mathbb{R}^n$ . In this case, the assumption $(\ref{Lem:in1})$ holds with the same Lipschitz constant value of $\lambda.$
\end{remark}

The following example illustrates our Theorem \ref{Thm:IQVIP}.

\begin{example}

Define $s:\mathbb{R}\rightarrow\mathbb{R}$ by $s(x) := \frac{1}{1+|x|}$ and $f:\mathbb{R}\rightarrow\mathbb{R}$
by $f(x) := 2x$, and let $\alpha=2.$

Define $\Phi:\mathbb{R}\rightarrow2^\mathbb{R}$ by $\Phi(x) := s(x)+K$,  where $K=[-1, 1]$.
It is clear that for each $x\in\mathbb{R}$, $\Phi(x)$ is a non-empty, closed and convex subset of $\mathbb{R}$.

We have
\begin{align*}
|s(x)-s(y)|&=\Big\vert\frac{1}{1+|x|}-\frac{1}{1+|y|}\Big\vert\\&\leq \vert \vert x\vert -\vert y\vert\vert\leq \vert x-y\vert.
\end{align*}

Therefore the function $s$ is Lipschitz continuous with constant $\kappa=1$.

It is not dificult to check that $f$ is $2$-Lipschitz continuous, $2$-strongly monotone and that
\begin{align*}
L^2-2\alpha(\beta-\kappa)<\kappa^2.
\end{align*}

Therefore, by Theorem \ref{Thm:IQVIP}, IQVIP (\ref{IQVIP:P}) has a unique solution.

In particular, $x^*=0$ is the unique solution of the IQVIP (\ref{IQVIP:P}).
\end{example}

\section{Existence and uniqueness}
\label{Sec:4}

In this section we establish the existence and uniqueness of the solution to the proposed neural network $(\ref{IQVIP})$.

\begin{theorem}\label{ExT}
Let $\Phi:\mathbb{R}^n\rightarrow 2^{\mathbb{R}^n}$ be a set-valued mapping with nonempty, closed and convex point images and let $f:\mathbb{R}^n\rightarrow \mathbb{R}^n$ be a Lipschitz continuous mapping with Lipschitz constant $L$. Assume that there exists a number $\kappa>0$ such that
\begin{align}
\|P_{\Phi(x)}(z)-P_{\Phi(y)}(z)\| \leq\kappa\|x-y\| \;\quad\forall x, y, z\in\mathbb{R}^n.
\end{align}
Then the dynamical system $(\ref{IQVIP})$ a has unique solution.
\end{theorem}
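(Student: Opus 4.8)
The plan is to regard \eqref{IQVIP} as the non-autonomous ODE $\dot x=S(x,t)$ on $\mathbb{R}^n$ and to combine the Cauchy--Lipschitz (Picard--Lindel\"of) theorem with the continuation principle. Fix an initial time $t_0\ge 0$ and an initial state $x_0\in\mathbb{R}^n$. The first step is to check that $S$ is continuous in $(x,t)$ and Lipschitz in $x$, uniformly for $t$ in compact intervals. Writing $\bar x=f(x)-\alpha x$, $\bar y=f(y)-\alpha y$ and $D:=(\bar x-P_{\Phi(x)}(\bar x))-(\bar y-P_{\Phi(y)}(\bar y))$, the elementary identity $P_{\Phi(x)}(\bar x)-f(x)=-(\bar x-P_{\Phi(x)}(\bar x))-\alpha x$ gives $S(x,t)-S(y,t)=-\lambda(t)\bigl(D+\alpha(x-y)\bigr)$. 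By Lemma \ref{Result:1}, $\|D\|\le\|\bar x-\bar y\|+\|P_{\Phi(x)}(\bar y)-P_{\Phi(y)}(\bar y)\|$, where $\|\bar x-\bar y\|\le(L+\alpha)\|x-y\|$ by the $L$-Lipschitz continuity of $f$ and $\|P_{\Phi(x)}(\bar y)-P_{\Phi(y)}(\bar y)\|\le\kappa\|x-y\|$ by hypothesis; hence
\[
\|S(x,t)-S(y,t)\|\le\lambda(t)\,(L+2\alpha+\kappa)\,\|x-y\|\qquad\forall\,x,y\in\mathbb{R}^n .
\]
Since $\lambda$ is continuous, on each interval $[t_0,t_0+a]$ this is a uniform Lipschitz bound with constant $\ell_a:=(L+2\alpha+\kappa)\max_{[t_0,t_0+a]}\lambda<\infty$, and $S$ is jointly continuous there. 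Picard--Lindel\"of then provides a unique solution of \eqref{IQVIP} with $x(t_0)=x_0$ on a maximal interval $[t_0,T_{\max})$.

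The second step is to show that $T_{\max}=\infty$. Suppose, for contradiction, that $T_{\max}<\infty$. On $[t_0,T_{\max})$ we have $\|x(t)-x_0\|\le\int_{t_0}^{t}\|S(x(s),s)\|\,ds$, and $\|S(x(s),s)\|\le\|S(x_0,s)\|+\ell\,\|x(s)-x_0\|$, where $\ell$ is the (finite) Lipschitz constant of $S(\cdot,s)$ valid for $s\in[t_0,T_{\max}]$. Since $s\mapsto\|S(x_0,s)\|$ is continuous on the compact interval $[t_0,T_{\max}]$, Gronwall's inequality yields a bound $\|x(t)-x_0\|\le C$ on $[t_0,T_{\max})$ with $C$ independent of $t$. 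Thus the trajectory remains in a fixed compact set, on which $S$ is bounded, so $\dot x$ is bounded and $x(t)$ converges to some $x_\ast$ as $t\to T_{\max}^-$; solving \eqref{IQVIP} afresh from $(T_{\max},x_\ast)$ would extend the solution past $T_{\max}$, contradicting maximality. Hence the solution exists on all of $[t_0,\infty)$, and uniqueness there follows from the local uniqueness clause of Picard--Lindel\"of applied on every compact subinterval. As $t_0$ and $x_0$ were arbitrary, \eqref{IQVIP} has a unique (global) solution for each initial condition.

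I expect the only genuinely delicate point to be the passage from local to global existence: because $\lambda$ need not be bounded on $[0,\infty)$, the Lipschitz estimate for $S$ is only \emph{locally} uniform in $t$, so the Gronwall bound must be run on the finite interval $[t_0,T_{\max}]$ furnished by the contradiction hypothesis before the continuation principle can be invoked. Everything else is routine verification, with Lemma \ref{Result:1} supplying the analytic heart of the Lipschitz estimate.
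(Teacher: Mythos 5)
Your proof is correct and follows essentially the same route as the paper: establish that $S(\cdot,t)$ is globally Lipschitz in $x$ for each fixed $t$ (you obtain the constant $(L+2\alpha+\kappa)\lambda(t)$ via Lemma \ref{Result:1} and your identity, while the paper obtains $(2L+\alpha+\kappa)\lambda(t)$ by a direct triangle-inequality estimate using nonexpansiveness of the projection --- both bounds are valid) and then invoke standard ODE theory. The paper simply cites Hartman for global existence and uniqueness once the Lipschitz estimate and continuity in $t$ are in hand, whereas you spell out the Picard--Lindel\"of local step and the Gronwall continuation argument explicitly; this is additional rigor on the same approach rather than a different method.
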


\begin{proof}
We claim that $S(x, t)$ is Lipschitz continuous for all fixed $t\geq 0$. Indeed, we have
\begin{align*}
&\|S(x, t)-S(y, t)\|\\&=\|\lambda(t)( P_{\Phi(x)}(f(x)-\alpha x)-f(x))-\lambda(t)(P_{\Phi(y)}(f(y)-\alpha y)-f(y))\|\\&
\leq \lambda(t)\|f(x)-f(y)\|+\lambda(t)\|P_{\Phi(x)}(f(x)-\alpha x)-P_{\Phi(y)}(f(y)-\alpha y)\|\\&
\leq \lambda(t)\|f(x)-f(y)\|+\lambda(t)[\|P_{\Phi(x)}(f(x)-\alpha x)-P_{\Phi(x)}(f(y)-\alpha y)\|\\&+\|P_{\Phi(x)}(f(y)-\alpha y)-P_{\Phi(y)}(f(y)-\alpha y)\|] \\&
\leq \lambda(t)\|f(x)-f(y)\|+\lambda(t)[\|(f(x)-\alpha x)-(f(y)-\alpha y)\|+\kappa\|x-y\|] \\&
\leq(2L+\alpha+\kappa)\lambda(t)\|x-y\|.
\end{align*}

Furthermore, if $\lambda(t)$ is continuous, then the function $S(x,\cdot)$ is continuous for all fixed $x\in\mathbb{R}^n$ and the differential equation $(\ref{IQVIP})$,
for arbitrary initial points $x_0\in\mathbb{R}^n$, has a unique solution for all $t\geq t_0\geq 0$ (see \cite{HART2002}).
\end{proof}

\section{Stability Analysis}
\label{Sec:5}

In this section we discuss the global asymptotic and exponential stability of the neural network $(\ref{IQVIP})$.

It is known that Lyapunov functions play a key role in the stability analysis of dynamical systems.
Recall that a function $V: \mathbb{R}^n\to \mathbb{R}$ is said to be a Lyapunov function (about $x=x_e$) for
the dynamical system (\ref{ds}) if the following three properties are satisfied:
\begin{itemize}
\item[(L1)] $V$ is positive definite, namely, $V(x)\ge 0$ for all $x\in \mathbb{R}^n$
and  $V(x)=0$ if and only if $x=x_e$;
\item[(L2)]
$\dot{V}$ is negative definite along the trajectories of (\ref{ds}), that is, if $x(t)$ is a
trajectory of (\ref{ds}), then $\dot{V}(x(t))\le 0$ for all $t\ge 0$ and $\dot{V}(x)<0$ for all $x\neq x_e$;
\item[(L3)]
$V$ is coercive (also known as radially unbounded), that is, $V(x)\to\infty$ as $\|x\|\to\infty$.
\end{itemize}

The following result is one of the fundamental theorems in the theory dynamical systems.

\begin{theorem} \label{T:Lya} (Lyapunov's Theorem)
Let $x_e$ be an equilibrium of the dynamical system (\ref{ds}).
If there exists a Lyapunov function about $x_e$, then the dynamical system (\ref{ds})
globally asymptotically stable at the equilibrium point $x_e$.
\end{theorem}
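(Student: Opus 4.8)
The plan is to run the classical Lyapunov argument, in the spirit of LaSalle's invariance principle: use the Lyapunov function first to confine every trajectory to a compact set (so that it is defined for all forward time and has limit points), then exploit the monotonicity of $V$ along trajectories together with the strict decrease of $\dot V$ away from $x_e$ to force each trajectory to converge to $x_e$.

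First I would fix an arbitrary trajectory $x=x(t)$ of (\ref{ds}). Since $h$ is smooth, a unique maximal solution through $x(0)=x_0$ exists on some interval $[0,T)$. Differentiating $V$ along the trajectory and using (L2) gives $\tfrac{d}{dt}V(x(t))=\langle\nabla V(x(t)),h(x(t))\rangle=\dot V(x(t))\le 0$, so $t\mapsto V(x(t))$ is nonincreasing; in particular $V(x(t))\le V(x_0)$ on $[0,T)$. By (L3) the sublevel set $\{x\in\mathbb{R}^n : V(x)\le V(x_0)\}$ is bounded, and it is closed by continuity of $V$, hence compact. The trajectory therefore stays in this compact set, so it cannot leave every compact set in finite time, which forces $T=\infty$. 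Moreover $V(x(t))$, being nonincreasing and bounded below by $0$ (by (L1)), converges to some limit $c\ge 0$ as $t\to\infty$.

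Next I would identify this limit. Choose any sequence $t_k\to\infty$; by compactness a subsequence of $(x(t_k))$ converges to some $\bar x$, and continuity of $V$ gives $V(\bar x)=c$. The key claim is $\bar x=x_e$ (equivalently $c=0$). Suppose not. Then by (L2) we have $\dot V(\bar x)=-a$ for some $a>0$, and since $\dot V=\langle\nabla V,h\rangle$ is continuous, there are a neighborhood $U$ of $\bar x$ and a time $\tau>0$ such that the solution $y(\cdot)$ with $y(0)=\bar x$ stays in $U$ on $[0,\tau]$ with $\dot V(y(t))\le -a/2$ there, whence $V(y(\tau))\le c-a\tau/2<c$. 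By continuous dependence of solutions on initial data, for $k$ large enough $x(t_k+\tau)$ lies so close to $y(\tau)$ that $V(x(t_k+\tau))<c$, contradicting $V(x(t))\ge c$ for all $t$. Hence $c=0$ and $\bar x=x_e$, so every limit point of the trajectory equals $x_e$.

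Finally I would upgrade this to genuine convergence. If $x(t)\not\to x_e$, there exist $\varepsilon>0$ and $t_k\to\infty$ with $\|x(t_k)-x_e\|\ge\varepsilon$; since $(x(t_k))$ lies in the compact sublevel set it has a convergent subsequence, whose limit $\bar x$ satisfies $\|\bar x-x_e\|\ge\varepsilon>0$, contradicting the previous paragraph. Thus $x(t)\to x_e$ as $t\to\infty$, and since the trajectory was arbitrary, (\ref{ds}) is globally asymptotically stable at $x_e$; applying this to the constant trajectory sitting at any other equilibrium shows at once that $x_e$ is the unique equilibrium. I expect the main obstacle to be precisely the step that pins down the limit $c$: monotonicity of $V$ alone only gives convergence of the \emph{value} $V(x(t))$, so one genuinely needs to combine the strict negativity of $\dot V$ off $x_e$ with continuous dependence on initial conditions (or, equivalently, invoke LaSalle's invariance principle) to conclude that the trajectory itself converges.
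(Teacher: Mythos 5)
The paper does not prove this statement at all: it is quoted as a classical result (``one of the fundamental theorems in the theory of dynamical systems'') and used as a black box in Theorem \ref{Result4}, so there is no in-paper argument to compare against. Your proposal is a correct and complete rendition of the standard Lyapunov/LaSalle-type proof: coercivity (L3) traps the trajectory in a compact sublevel set and guarantees global forward existence, monotonicity of $V$ along trajectories gives convergence of the value $V(x(t))$ to some $c\ge 0$, and the strict negativity of $\dot V$ off $x_e$ combined with continuous dependence on initial data rules out $c>0$; the subsequence argument then upgrades convergence of limit points to convergence of the trajectory, and uniqueness of the equilibrium follows by applying the conclusion to constant trajectories. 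You are also right that the delicate step is pinning down $c=0$, since monotonicity alone only controls the value of $V$.

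Two implicit hypotheses are worth flagging, since the paper's definition of a Lyapunov function does not state them explicitly. First, your argument needs $\dot V(x)=\langle\nabla V(x),h(x)\rangle$ to be a continuous function of the state $x$ (not merely defined along trajectories), which requires $V\in C^1$; this is the standard reading of (L2) and is harmless here. Second, the step ``$x(t_k+\tau)=\phi_\tau(x(t_k))\to\phi_\tau(\bar x)=y(\tau)$'' uses the flow/semigroup property and hence autonomy of $\dot x=h(x)$; that matches the statement of (\ref{ds}) as written, but note that the paper later applies the theorem to the non-autonomous network (\ref{IQVIP}) with time-varying $\lambda(t)$, where this particular argument would need the (routine) modification to a two-parameter evolution family, or the direct Gronwall-type estimate that the paper actually derives in Theorem \ref{Result4}.
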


We use the above Theorem \ref{T:Lya} to show the stability of the solution to the system $(\ref{IQVIP})$.

\begin{theorem}\label{Result4}
Let $\Phi:\mathbb{R}^n\rightarrow 2^{\mathbb{R}^n}$ be a set-valued map with nonempty, closed and convex point values and let $f:\mathbb{R}^n\rightarrow \mathbb{R}^n$ be  Lipschitz continuous with Lipschitz constant $L$ and $\beta$-strongly monotone. Assume that the parameters $\lambda(t)\in C([0, \infty))$.
Assume that
\begin{align}
&1+2\kappa-2\beta+\alpha^2+L^2-2\alpha\beta<0\nonumber,\\&
\int_{t_0}^\infty \lambda(t)dt=+\infty\nonumber
\end{align}
and
\begin{align}
\hspace{-2cm} L^2-2\alpha(\beta-\kappa)<\kappa^2\nonumber,
\end{align}
where $\kappa$ satisfies
\begin{align*}
\|P_{\Phi(x)}(z)-P_{\Phi(y)}(z)\| \leq\kappa\|x-y\|\quad\forall x, y, z\in\mathbb{R}^n.
\end{align*}
Then the dynamical system $(\ref{IQVIP})$ converges to the solution of IQVIP (\ref{IQVIP:P}) at the rate
\begin{align*}
\|x(t)-x^*\|\leq\|x_0-x^*\|\exp{\int_{t_0}^t \Lambda(t)dt},
\end{align*}
where $\Lambda(t)=\lambda(t)[1+2\kappa-2\beta+\alpha^2+L^2-2\alpha\beta]$.

Furthermore, the dynamical system $(\ref{IQVIP})$ is globally asymptotically at the point of equilibrium $x^*$. In addition, if $\lambda(t)\geq\lambda^*>0$ for every $t\geq 0$, then the dynamical system $(\ref{IQVIP})$ is globally exponentially stable at the point of equilibrium $x^*$.
\end{theorem}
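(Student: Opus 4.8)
The plan is to run a Lyapunov/Grönwall argument based on the squared distance to the solution. First I would note that the hypotheses of this theorem are exactly those of Theorem~\ref{Thm:IQVIP}, so IQVIP~(\ref{IQVIP:P}) has a unique solution $x^*$; equivalently $f(x^*)=P_{\Phi(x^*)}(f(x^*)-\alpha x^*)$, i.e. $S(x^*,t)=0$ for all $t\ge 0$, so $x^*$ is the unique equilibrium of~(\ref{IQVIP}). By Theorem~\ref{ExT}, for any initial datum $x_0=x(t_0)$ the system~(\ref{IQVIP}) has a unique, continuously differentiable, global-in-time solution $x(\cdot)$. I would take as Lyapunov candidate $V(x)=\tfrac12\|x-x^*\|^2$ (positive definite, coercive) and aim to show that along trajectories $\frac{d}{dt}V(x(t))\le\Lambda(t)V(x(t))$ with $\Lambda(t)=\lambda(t)[1+2\kappa-2\beta+\alpha^2+L^2-2\alpha\beta]$.

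For the key differential inequality, differentiate and use~(\ref{IQVIP}):
\[
\frac{d}{dt}V(x(t))=\big\langle x(t)-x^*,\dot x(t)\big\rangle=\lambda(t)\big\langle x-x^*,\,P_{\Phi(x)}(f(x)-\alpha x)-f(x)\big\rangle .
\]
Using $f(x^*)=P_{\Phi(x^*)}(f(x^*)-\alpha x^*)$, rewrite the second argument as $\big[P_{\Phi(x)}(f(x)-\alpha x)-P_{\Phi(x^*)}(f(x^*)-\alpha x^*)\big]-\big[f(x)-f(x^*)\big]$. For the $f$-part, $\beta$-strong monotonicity gives $\langle x-x^*,f(x)-f(x^*)\rangle\ge\beta\|x-x^*\|^2$. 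For the projection part, the triangle inequality together with the contraction bound~(\ref{Lem:in1}) on $\Phi$ and the nonexpansiveness of the projection (Lemma~\ref{plem}) yields
\[
\big\|P_{\Phi(x)}(f(x)-\alpha x)-P_{\Phi(x^*)}(f(x^*)-\alpha x^*)\big\|\le\kappa\|x-x^*\|+\big\|(f(x)-f(x^*))-\alpha(x-x^*)\big\|,
\]
and expanding the square exactly as in~(\ref{in2}), with $L$-Lipschitz continuity and $\beta$-strong monotonicity, gives $\|(f(x)-f(x^*))-\alpha(x-x^*)\|^2\le(L^2-2\alpha\beta+\alpha^2)\|x-x^*\|^2$. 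Combining these via Cauchy--Schwarz and $2ab\le a^2+b^2$ (to linearise the square root of $L^2-2\alpha\beta+\alpha^2$) produces $\langle x-x^*,\,P_{\Phi(x)}(f(x)-\alpha x)-f(x)\rangle\le\tfrac12\big[1+2\kappa-2\beta+\alpha^2+L^2-2\alpha\beta\big]\|x-x^*\|^2$, which is the desired bound $\frac{d}{dt}V(x(t))\le\Lambda(t)V(x(t))$.

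From here the conclusions follow by integration. Since the bracket is negative by hypothesis and $\lambda(t)>0$, we have $\Lambda(t)<0$, and Grönwall's inequality applied to $V(x(t))$ yields the stated rate $\|x(t)-x^*\|\le\|x_0-x^*\|\exp\!\big(\int_{t_0}^t\Lambda(s)\,ds\big)$. Because $\int_{t_0}^\infty\lambda(s)\,ds=+\infty$ and the bracket is a fixed negative number, $\int_{t_0}^t\Lambda(s)\,ds\to-\infty$ as $t\to\infty$; hence $\|x(t)-x^*\|\to 0$ for every initial point, and since $x^*$ is the unique equilibrium, the network is globally asymptotically stable at $x^*$. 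If in addition $\lambda(t)\ge\lambda^*>0$ for all $t\ge 0$, then (the bracket being negative) $\Lambda(t)\le\lambda^*[1+2\kappa-2\beta+\alpha^2+L^2-2\alpha\beta]=:-\xi<0$, so $\int_{t_0}^t\Lambda(s)\,ds\le-\xi(t-t_0)$ and $\|x(t)-x^*\|\le\|x_0-x^*\|e^{-\xi(t-t_0)}$, i.e. the network is globally exponentially stable at $x^*$ (with $\eta=\|x_0-x^*\|$).

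The step I expect to be the crux is the estimate of the cross term $\langle x-x^*,\,P_{\Phi(x)}(f(x)-\alpha x)-P_{\Phi(x^*)}(f(x^*)-\alpha x^*)\rangle$: one must balance the contraction control~(\ref{Lem:in1}) on the set-valued map $\Phi$ against the (firm) nonexpansiveness of the metric projection and the strong-monotonicity/Lipschitz data of $f$, and choose the linearisation of $\sqrt{L^2-2\alpha\beta+\alpha^2}$ so that exactly the combination $1+2\kappa-2\beta+\alpha^2+L^2-2\alpha\beta$ --- the quantity assumed negative --- ends up controlling $\dot V$. A secondary subtlety is that~(\ref{IQVIP}) is non-autonomous through $\lambda(t)$, so the asymptotic-stability claim is obtained directly from the Grönwall estimate and $\int_{t_0}^\infty\lambda=+\infty$ rather than from the autonomous Lyapunov theorem, Theorem~\ref{T:Lya}.
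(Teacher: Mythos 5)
Your proposal is correct and follows essentially the same route as the paper: the Lyapunov function $\|x-x^*\|^2$ (up to the harmless factor $\tfrac12$), strong monotonicity for the $f$-term, the $\kappa$-contraction bound plus nonexpansiveness of the projection for the remaining term, Young's inequality to linearise $\sqrt{L^2-2\alpha\beta+\alpha^2}$, and Gr\"onwall to obtain the rate and the two stability conclusions; the only difference is that you apply Cauchy--Schwarz before the triangle-inequality split of the projection term while the paper splits first and applies Young's inequality to one inner product directly, and both orderings yield exactly the constant $1+2\kappa-2\beta+\alpha^2+L^2-2\alpha\beta$. Your closing remark that the non-autonomous system requires concluding asymptotic stability from the Gr\"onwall estimate rather than from the autonomous Lyapunov theorem is a fair and slightly more careful reading than the paper's own appeal to Theorem \ref{T:Lya}.
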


\begin{proof}
Using Theorem \ref{ExT}, we can easily show that $(\ref{IQVIP})$ has a unique solution. Also, under our assumptions, it is not difficult to check that the neural network $(\ref{IQVIP})$ has a unique equilibrium point, that is, the IQVIP (\ref{IQVIP:P}) has a unique solution. Let $x^*$ be the unique solution of the IQVIP (\ref{IQVIP:P}). Now we have to show that the trajectories of the network are globally asymptotically stable at the equilibrium point $x^*$. To this end, consider the Lyapunov function
\begin{align*}
V(x)=\|x-x^*\|^2.
\end{align*}
We have
{\small
\begin{align*}
&\dot{V}(x)=2 \left\langle x-x^*, x^\prime\right\rangle\\&
=2\left\langle x-x^*, \lambda(t)(P_{\Phi(x)}(f(x)-\alpha x)-f(x))\right\rangle\\&
=2\left\langle x-x^*, \lambda(t)(P_{\Phi(x)}(f(x)-\alpha x)-f(x))-\lambda(t)(P_{\Phi(x^*)}(f(x^*)-\alpha x^*)-f(x^*))\right\rangle\\&
=-2\lambda(t)\left\langle  x-x^*, f(x)-f(x^*)\right\rangle\\&+2\lambda(t)\left\langle x-x^*,P_{\Phi(x)}(f(x)-\alpha x)-P_{\Phi(x^*)}(f(x^*)-\alpha x^*)\right\rangle\\&
=-2\lambda(t)\left\langle  x-x^*, f(x)-f(x^*)\right\rangle\\&
+2\lambda(t)\left\langle x-x^*,P_{\Phi(x)}(f(x)-\alpha x)-P_{\Phi(x)}(f(x^*)-\alpha x^*)\right\rangle\\&
+2\lambda(t)\left\langle x-x^*,P_{\Phi(x)}(f(x^*)-\alpha x^*)-P_{\Phi(x^*)}(f(x^*)-\alpha x^*)\right\rangle\\&
\leq -2\lambda(t)\beta\|x-x^*\|^2+\lambda(t) [\|x-x^*\|^2\\&+\|P_{\Phi(x)}(f(x)-\alpha x)-P_{\Phi(x)}(f(x^*)-\alpha x^*)\|^2]+2\lambda(t)\kappa\|x-x^*\|^2\\&
\leq \lambda(t)[1+2\kappa-2\beta+\alpha^2+L^2-2\alpha\beta]\|x-x^*\|^2\\&
=\lambda(t)[1+2\kappa-2\beta+\alpha^2+L^2-2\alpha\beta]V(x)=\Lambda(t)V(x),
\end{align*}}
where $\Lambda(t)=\lambda(t)[1+2\kappa-2\beta+\alpha^2+L^2-2\alpha\beta]$.

Since $\int_{t_0}^\infty \lambda(t)dt=+\infty$ and $1+2\kappa-2\beta+\alpha^2+L^2-2\alpha\beta<0$, we see that $\int_{t_0}^\infty \Lambda(t)dt=-\infty$. Hence $\exp{\int_{t_0}^\infty \Lambda(t)dt}=0.$

Consequently, the trajectory $x(t)$ converges to the unique solution $x^*$ of $(\ref{IQVIP:P})$ and it is not difficult to show that
\begin{align}\label{asymp:inq}
\|x(t)-x^*\|\leq\|x_0-x^*\|\exp{\int_{t_0}^t \Lambda(t)dt}.
\end{align}

It now follows from Theorem \ref{T:Lya} that the dynamical system $(\ref{IQVIP})$ is globally asymptotically stable at the equilibrium point $x^*$.

If $\lambda(t)\geq\lambda^*>0$ for every $t\geq 0$, from (\ref{asymp:inq}) we get
\begin{align*}
\|x(t)-x^*\|\leq\|x_0-x^*\|e^{-\zeta(t-t_0)}\quad\forall t\geq t_0,
\end{align*}
where $\zeta=-\lambda^*(1+2\kappa-2\beta+\alpha^2+L^2-2\alpha\beta)>0$.

Therefore, the dynamical system $(\ref{IQVIP})$ is globally exponentially stable at the equilibrium point $x^*$, as asserted.

\end{proof}

\begin{remark}
In this section we have analyzed the stability of the continuous recurrent neural network (\ref{IQVIP}) for solving the inverse quasi-variational inequality problem (\ref{IQVIP:P}) with the variable step size parameter $\lambda(t)$, $t\in [0, \infty)$. We have shown that if the function which governs the IQVIP (\ref{IQVIP:P}) is strongly monotone and Lipschitz continuous, then the network (\ref{IQVIP}) is globally asymptotically and globally exponentially stable at its equilibrium point.
\end{remark}

\section{Discretization of the Network (\ref{IQVIP})}
\label{Sec:6}
In this section we study the discretization method for solving the neural network (\ref{IQVIP}) and establish its strong convergence under certain assumptions on the parameters involved.

The explicit discretization of the neural network (\ref{IQVIP}) with respect to $t$ with the step-size $h_n$ and with the initial point $x_0\in\mathbb{R}^n$ is as follows:
\begin{align}
\frac{x_{n+1}-x_n}{h_n}=\lambda_n \{P_{\Phi(x_n)}(f(x_n)-\alpha x_n)-f(x_n)\},
\end{align}

or, equivalently,
\begin{align}\label{mainin1}
x_{n+1}=x_n+\lambda_n h_n\{P_{\Phi(x_n)}(f(x_n)-\alpha x_n)-f(x_n)\}.
\end{align}

If $h_n=1$, then the above scheme reduces to the following one:
\begin{align}\label{Sub:mainin1}
x_{n+1}=x_n+\lambda_n \{P_{\Phi(x_n)}(f(x_n)-\alpha x_n)-f(x_n)\}.
\end{align}

In this section we study the above modified gradient projection method with variable step sizes.

\begin{theorem}\label{Dis:Thm}
Assume that
\begin{enumerate}
\item $f:\mathbb{R}^n\rightarrow\mathbb{R}^n$ is $\beta$-strongly monotone and $L$-Lipschitz continuous;
\item $\Phi(x)=s(x)+\Omega$, where $s:\mathbb{R}^n\rightarrow\mathbb{R}^n$ is a Lipschitz continuous mapping with Lipschitz constant $l>0$
and $\Omega$ is a nonempty, closed and convex subset of $\mathbb{R}^n$;
\item
\begin{align}
\beta>l,\quad
\alpha>\frac{L^2+l^2}{2(\beta-l)};\label{Asump:trick1}
\end{align}
\item for every $n\in\mathbb{N}$,
\begin{align}
& 0<A<\lambda_n<B,\label{Asump:trick2}\quad \text{where}\\&
0<\frac{B^2}{A}<\frac{2\alpha(\beta-l)-(L^2+l^2)}{\alpha^2 (\beta-l)}\label{Asump:trick3}.
\end{align}
\end{enumerate}
Then the sequence $\{x_n\}$ generated by (\ref{Sub:mainin1}) converges strongly to the unique solution of the IQVIP (\ref{IQVIP:P}).
\end{theorem}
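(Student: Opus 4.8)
The plan is to establish geometric strong convergence: I will show that one step of (\ref{Sub:mainin1}) shrinks the squared distance to the (unique) solution $x^*$ by a factor bounded uniformly by some $\theta<1$.

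First, existence and uniqueness of $x^*$. Since $\Phi(x)=s(x)+\Omega$ with $s$ being $l$-Lipschitz, Lemma (\ref{plem2}) together with the firm nonexpansiveness of $P_\Omega$ shows (exactly as in the Remark following Theorem \ref{Thm:IQVIP}) that condition (\ref{Lem:in1}) holds with $\kappa=l$. Moreover $\beta>l$ by (\ref{Asump:trick1}), and $\alpha>(L^2+l^2)/(2(\beta-l))$ gives $2\alpha(\beta-l)>L^2+l^2>L^2-l^2$, i.e. $L^2-2\alpha(\beta-l)<l^2$. Hence all hypotheses of Theorem \ref{Thm:IQVIP} hold, so (\ref{IQVIP:P}) has a unique solution $x^*$, characterized by $P_{\Phi(x^*)}(f(x^*)-\alpha x^*)=f(x^*)$. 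Writing $\psi:=f-s$, which is $(\beta-l)$-strongly monotone, and using (\ref{plem2}) once more, this reads $P_\Omega(\psi(x^*)-\alpha x^*)=\psi(x^*)$, while the recursion (\ref{Sub:mainin1}) itself becomes $x_{n+1}=x_n+\lambda_n\{P_\Omega(\psi(x_n)-\alpha x_n)-\psi(x_n)\}$; in particular $x^*$ is an equilibrium of the scheme.

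The heart of the matter --- and the step I expect to be the main obstacle --- is a sharp per-step estimate. Put $e_n:=x_n-x^*$ and $g(x):=P_\Omega(\psi(x)-\alpha x)-\psi(x)$, so $g(x^*)=0$ and $e_{n+1}=e_n+\lambda_n g(x_n)$, hence
\[
\|e_{n+1}\|^2=\|e_n\|^2+2\lambda_n\langle e_n,g(x_n)\rangle+\lambda_n^2\|g(x_n)\|^2 .
\]
For the linear term I would expand $g(x_n)-g(x^*)$, split the projection difference $P_\Omega(\psi(x_n)-\alpha x_n)-P_\Omega(\psi(x^*)-\alpha x^*)$, and combine the nonexpansiveness (and firm nonexpansiveness) of $P_\Omega$ with the strong monotonicity $\langle\psi(x_n)-\psi(x^*),e_n\rangle\ge(\beta-l)\|e_n\|^2$ and the Lipschitz bounds $\|f(x_n)-f(x^*)\|\le L\|e_n\|$, $\|s(x_n)-s(x^*)\|\le l\|e_n\|$, the latter used through Young's inequality to absorb the mixed products; the quadratic term $\|g(x_n)\|^2$ is treated the same way. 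The goal is the inequality
\[
\|e_{n+1}\|^2\le\bigl[\,1-\lambda_n\bigl(2\alpha(\beta-l)-(L^2+l^2)\bigr)+\lambda_n^2\,\alpha^2(\beta-l)\,\bigr]\|e_n\|^2 .
\]
What makes this delicate is getting every constant to land exactly so: the $\lambda_n^0$ coefficient must stay equal to $1$ while the $\lambda_n$ and $\lambda_n^2$ coefficients must come out precisely $2\alpha(\beta-l)-(L^2+l^2)$ and $\alpha^2(\beta-l)$. This fixes the weight to be used in Young's inequality (I expect $\alpha$, resp. $1/\alpha$).

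Finally, let $\Theta(\lambda):=1-\lambda c_1+\lambda^2 c_2$ with $c_1:=2\alpha(\beta-l)-(L^2+l^2)$ and $c_2:=\alpha^2(\beta-l)$. By (\ref{Asump:trick1}) we have $c_1>0$ and $c_2>0$, so $0<A<\lambda_n<B$ gives $\Theta(\lambda_n)\le 1-Ac_1+B^2c_2=:\theta$ for all $n$. Condition (\ref{Asump:trick3}) is exactly $B^2/A<c_1/c_2$, i.e. $B^2c_2<Ac_1$, whence $\theta<1$. Thus $\|x_{n+1}-x^*\|^2\le\theta\,\|x_n-x^*\|^2$ for all $n$; iterating, $\|x_n-x^*\|\to 0$ (if $\theta\le 0$ the first step already yields $x_1=x^*$, and if $0<\theta<1$ then $\|x_n-x^*\|\le\theta^{n/2}\|x_0-x^*\|$), so $\{x_n\}$ converges strongly to the unique solution $x^*$ of (\ref{IQVIP:P}). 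One may note in passing that the scheme is the relaxed Picard iteration $x_{n+1}=(1-\lambda_n\alpha)x_n+\lambda_n\alpha\,h(x_n)$ for the contraction $h$ of Theorem \ref{Thm:IQVIP}, which already hints at convergence, but the precise parameter window (\ref{Asump:trick2})--(\ref{Asump:trick3}) is recovered only through the quadratic-in-$\lambda_n$ estimate above.
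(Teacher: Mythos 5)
Your overall architecture is the right one (reduce to existence/uniqueness via Theorem \ref{Thm:IQVIP} with $\kappa=l$, then prove a per-step contraction of $\|x_n-x^*\|^2$ and close with the ratio test against (\ref{Asump:trick3})), and your observations that the iteration is a relaxed Picard scheme for $h$ and that $\Phi(x)=s(x)+\Omega$ lets one fold $s$ into $\psi=f-s$ are both correct. But the proof has a genuine hole exactly where you flag "the main obstacle'': the per-step estimate is only announced as a goal, never derived, and as stated it is \emph{false}. Take $n=1$, $f(x)=2x$ (so $L=\beta=2$), $s\equiv 0$ (or $s(x)=\epsilon x$ with $\epsilon$ tiny to meet $l>0$), $\Omega=[-1,1]$, $\alpha=10$, $\lambda_n=0.1$ (admissible: $A=0.09$, $B=0.11$ gives $B^2/A\approx 0.134<36/200$). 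Then your bracket is
\[
1-\lambda_n\bigl(2\alpha(\beta-l)-(L^2+l^2)\bigr)+\lambda_n^2\alpha^2(\beta-l)=1-3.6+2=-0.6,
\]
so your inequality would force $x_1=x^*=0$ for every $x_0$; yet $x_0=10$ gives $x_1=10+0.1\,(P_{[-1,1]}(-80)-20)=7.9$. The correct coefficients are those the paper obtains, namely $1-\lambda_n C_1+\lambda_n^2C_2$ with $C_1=\frac{2\alpha(\beta-l)-(L^2+l^2)}{\alpha}$ and $C_2=\alpha(\beta-l)$ --- your $c_1,c_2$ divided by $\alpha$. Because $C_1/C_2=c_1/c_2$, your final arithmetic against (\ref{Asump:trick3}) happens to look consistent, which is presumably why the discrepancy went unnoticed; but the contraction factor itself is wrong, and no amount of tuning the Young weights in the scheme you sketch will produce it, since it is unattainable.

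The reason the paper lands on the right constants is that it does \emph{not} bound the linear and quadratic terms separately by $\|x_n-x^*\|^2$, as you propose. Writing $y_n=P_{\Phi(x_n)}(f(x_n)-\alpha x_n)$, it combines the variational characterization of $P_\Omega$ at $x_n$ with the solution property of $x^*$ to get the single inequality (\ref{inq10}), and then extracts from it two consequences: (i) $\langle x_n-x^*,\,y_n-f(x_n)\rangle\le-\frac{2\alpha(\beta-l)-(L^2+l^2)}{2\alpha}\|x_n-x^*\|^2$, and (ii) the quadratic term is controlled by the \emph{linear} term, $\|y_n-f(x_n)\|^2\le\frac{2\alpha^2(\beta-l)}{2\alpha(\beta-l)-(L^2+l^2)}\,\bigl(-\langle x_n-x^*,\,y_n-f(x_n)\rangle\bigr)$. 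Substituting (ii) first and then (i) into the expansion of $\|x_{n+1}-x^*\|^2$ produces $1-\lambda_nC_1+\lambda_n^2C_2$. If you want to salvage your write-up, you must either reproduce this two-stage bound (or an equivalent one) or restate your target inequality with the corrected coefficients and actually prove it; everything downstream (the $\theta<1$ computation and the geometric decay) then goes through unchanged. A further small caution: if you do pass to $\psi=f-s$, the natural Lipschitz constant is $L+l$, which yields $(L+l)^2$ where the theorem's hypotheses have $L^2+l^2$, so $f$ and $s$ must be kept separate in the Young's-inequality step, as the paper does.
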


\begin{proof}

Using our above assumptions, and the existence and uniqueness Theorem \ref{Thm:IQVIP} for the IQVIP (\ref{IQVIP:P}), we conclude that it has a unique solution.
Let $x^*$ be the unique solution of the IQVIP (\ref{IQVIP:P}). Recalling (\ref{Sub:mainin1}), we have
\begin{align*}
x_{n+1}=x_n+\lambda_n \{P_{\Phi(x_n)}(f(x_n)-\alpha x_n)-f(x_n)\}
\end{align*}

and therefore
\begin{align}\label{maininq}
\|x_{n+1}-x^*\|^2&=\|x_n-x^*+\lambda_n\{P_{\Phi(x_n)}(f(x_n)-\alpha x_n)-f(x_n)\}\|^2\\&
=\|x_n-x^*\|^2+\lambda_n^2\|P_{\Phi(x_n)}(f(x_n)-\alpha x_n)-f(x_n)\|^2\nonumber\\&+2\lambda_n\left\langle x_n-x^*, P_{\Phi(x_n)}(f(x_n)-\alpha x_n)-f(x_n)\right\rangle\nonumber.
\end{align}

Setting $y_n=P_{\Phi(x_n)}(f(x_n)-\alpha x_n)$,
we see that we need to approximate $\|y_n-f(x_n)\|^2$.

To this end, we first note that
\begin{align}
y_n&=P_{s(x_n)+\Omega}(f(x_n)-\alpha x_n)\\&
=s(x_n)+P_{\Omega}(f(x_n)-\alpha x_n-s(x_n))\nonumber,
\end{align}
which is equivalent to
\begin{align}\label{inq1}
y_n-s(x_n)=P_{\Omega}(f(x_n)-\alpha x_n-s(x_n)).
\end{align}
Therefore, using the characterization of the nearest point projection, we see that, for any $n\in\mathbb{N}$,
\begin{align}\label{inq2}
\left\langle f(x_n)-\alpha x_n-y_n,z-y_n+s(x_n)\right\rangle\leq 0 \quad\forall z\in\Omega.
\end{align}

Since $x^*$ is a solution of the IQVIP (\ref{IQVIP:P}),
we have $x^*\in\mathbb{R}^n$ and
\begin{align}\label{inq3}
f(x^*)=P_{\Phi(x^*)}(f(x^*)-\alpha x^*).
\end{align}

Solving equation (\ref{inq3}) amounts to solving the following problem:
find $x^*\in\mathbb{R}^n$ such that
\begin{align}\label{inq4}
f(x^*)\in\Phi(x^*)\quad\text{and}\quad\left\langle x^*, f(x^*)-y\right\rangle\leq 0\quad\forall y\in\Phi(x^*).
\end{align}

Since $y_n-s(x_n)\in\Omega$, we have
\begin{align}\label{inq5}
y_n-s(x_n)+s(x^*)\in\Phi(x^*)\quad\forall n\in\mathbb{N}.
\end{align}

Using $(\ref{inq4})$ and $(\ref{inq5})$, we get
\begin{align}\label{inq6}
\left\langle \alpha x^*, f(x^*)-y_n+s(x_n)-s(x^*)\right\rangle\leq 0\quad\forall n\in\mathbb{N}.
\end{align}

Again, since $f(x^*)\in\Phi(x^*)$, we have $f(x^*)-s(x^*)\in\Omega$. Therefore it follows from (\ref{inq2}) that
\begin{align}\label{inq7}
\left\langle f(x_n)-\alpha x_n-y_n,f(x^*)-y_n+s(x_n)-s(x^*)\right\rangle\leq 0\quad\forall n\in\mathbb{N}.
\end{align}

Combining (\ref{inq6}) and (\ref{inq7}), we have
\begin{align}\label{inq8}
\left\langle f(x_n)-\alpha x_n-y_n+\alpha x^*,f(x^*)-y_n+s(x_n)-s(x^*)\right\rangle\leq 0\quad\forall n\in\mathbb{N}
\end{align}
for every $n\in\mathbb{N}$ and this is equivalent to
\begin{align}\label{inq9}
\left\langle f(x_n)-y_n-\alpha (x_n-x^*),f(x_n)-y_n+f(x^*)-f(x_n)+s(x_n)-s(x^*)\right\rangle\leq 0.
\end{align}

Simplifying (\ref{inq9}), we get
\begin{align}\label{inq10}
\|f(x_n)-y_n\|^2\leq & \left\langle f(x_n)-y_n, f(x_n)-f(x^*)\right\rangle+\left\langle y_n-f(x_n), s(x_n)-s(x^*)\right\rangle\\&
+\alpha\left\langle x_n-x^*, f(x_n)-y_n\right\rangle-\alpha\left\langle x_n-x^*, f(x_n)-f(x^*)\right\rangle\nonumber\\&+\alpha\left\langle x_n-x^*, s(x_n)-s(x^*)\right\rangle\nonumber.
\end{align}

Using Young's inequality, that is, $\left\langle x, y\right\rangle\leq \frac{\|x\|^2}{2}+\frac{\|y\|^2}{2}$, from (\ref{inq10}) we obtain
\begin{align}\label{inq11}
\alpha\left\langle x_n-x^*, y_n-f(x_n)\right\rangle\leq &\frac{\|f(x_n)-f(x^*)\|^2}{2}-\alpha\left\langle x_n-x^*, f(x_n)-f(x^*)\right\rangle\\&+\frac{\|s(x_n)-s(x^*)\|^2}{2}+\alpha\left\langle x_n-x^*, s(x_n)-s(x^*)\right\rangle\nonumber.
\end{align}

Since $f$ is $L$-Lipschitz continuous and $\beta$-strongly monotone, $s$ is $l$-Lipschitz continuous. Using inequality (\ref{inq11}), we get
\begin{align}\label{inq12}
\left\langle x_n-x^*, y_n-f(x_n)\right\rangle\leq \frac{1}{\alpha}(\alpha(l-\beta)+\frac{L^2+l^2}{2})\|x_n-x^*\|^2,
\end{align}
which is equivalent to
\begin{align}\label{inq13}
2\alpha\left\langle x_n-x^*, y_n-f(x_n)\right\rangle\leq(L^2+l^2+2\alpha(l-\beta))\|x_n-x^*\|^2.
\end{align}

Using (\ref{Asump:trick1}), it is not difficult to check that $2\alpha(\beta-l)-L^2-l^2>0$. Therefore, using (\ref{inq13}), we obtain
\begin{align}\label{inq14}
\left\langle x_n-x^*, f(x_n)-y_n\right\rangle\leq- \frac{(2\alpha(\beta-l)-L^2-l^2)}{2\alpha} \|x_n-x^*\|^2.
\end{align}

Since $f$ is $\beta$-strongly monotone, using (\ref{inq10}), we get
{\small \begin{align}\label{inq16}
\alpha\left\langle x_n-x^*, y_n-f(x_n)\right\rangle\leq &- \|y_n-f(x_n)\|^2+ \|y_n-f(x_n)\|\|f(x_n)-f(x^*)\|\\&+\|y_n-f(x_n)\|\|s(x_n)-s(x^*)\|\nonumber\\&
-\alpha\beta\|x_n-x^*\|^2\nonumber+\alpha\|x_n-x^*\|\|s(x_n)-s(x^*)\|\nonumber.
\end{align}}

We know that $f$ and $s$ are Lipschitz continuous with constants $L$ and $l$, respectively. Therefore, we infer from (\ref{inq16}) that
{\small \begin{align}\label{inq17}
\alpha\left\langle x_n-x^*, y_n-f(x_n)\right\rangle & \leq - \|y_n-f(x_n)\|^2\\&+ \frac{L}{\sqrt{\alpha(\beta-l)}}\|y_n-f(x_n)\|\nonumber\|x_n-x^*\|\sqrt{\alpha(\beta-l)}\\&+\frac{l}{\sqrt{\alpha(\beta-l)}}\|y_n-f(x_n)\|\|x_n-x^*\|\sqrt{\alpha(\beta-l)}\nonumber\\&
-\alpha\beta\|x_n-x^*\|^2\nonumber+\alpha l\|x_n-x^*\|^2\nonumber.
\end{align}}

Using the inequality $ab\leq\frac{a^2}{2}+\frac{b^2}{2}$ in (\ref{inq17}), we obtain
{\small \begin{align}\label{inq17*}
\alpha\left\langle x_n-x^*, y_n-f(x_n)\right\rangle & \leq - \|y_n-f(x_n)\|^2\\&+ \frac{L^2}{2\alpha(\beta-l)}\|y_n-f(x_n)\|^2\nonumber+\frac{\alpha(\beta-l)}{2}\|x_n-x^*\|^2\\&+\frac{l^2}{2\alpha(\beta-l)}\|y_n-f(x_n)\|^2+\frac{\alpha(\beta-l)}{2}\|x_n-x^*\|^2\nonumber\\&
-\alpha\beta\|x_n-x^*\|^2\nonumber+\alpha l\|x_n-x^*\|^2\nonumber.
\end{align}}

This implies that  
\begin{align}\label{inq18}
\alpha\left\langle x_n-x^*, y_n-f(x_n)\right\rangle\leq\frac{(L^2+l^2)-2\alpha(\beta-l)}{2\alpha(\beta-l)}\|y_n-f(x_n)\|^2.
\end{align}

Using assumption (\ref{Asump:trick1}) in (\ref{inq18}), we obtain
\begin{align}\label{inq19}
\|y_n-f(x_n)\|^2\leq \frac{-2\alpha^2(\beta-l)}{2\alpha(\beta-l)-(L^2+l^2)}\left\langle x_n-x^*, y_n-f(x_n)\right\rangle.
\end{align}

That is,
{\small \begin{align}\label{inq20}
\|P_{\Phi(x_n)}(f(x_n)-\alpha x_n)-f(x_n)\|^2\leq \frac{-2\alpha^2(\beta-l)}{2\alpha(\beta-l)-(L^2+l^2)}\left\langle x_n-x^*, y_n-f(x_n)\right\rangle.
\end{align}}

Substituting (\ref{inq20}) in (\ref{maininq}), we get
{\small \begin{align}\label{inq21}
\|x_{n+1}-x^*\|^2\leq &\|x_n-x^*\|^2+\Big(2\lambda_n-\frac{2\lambda_n^2\alpha^2(\beta-l)}{2\alpha(\beta-l)-(L^2+l^2)}\Big)\left\langle x_n-x^*, y_n-f(x_n)\right\rangle.
\end{align}}

Hence, using (\ref{inq14}), (\ref{Asump:trick2}) and (\ref{Asump:trick3}) in (\ref{inq21}) we obtain the following inequality:
{\tiny \begin{align}\label{inq21*}
\|x_{n+1}-x_n\|^2\leq &\|x_n-x^*\|^2-\Big(2\lambda_n-\frac{2\lambda_n^2\alpha^2(\beta-l)}{2\alpha(\beta-l)-(L^2+l^2)}\Big)\Big(\frac{2\alpha(\beta-l)-L^2-l^2}{2\alpha}\Big)\|x_n-x^*\|^2.
\end{align}}

This implies that
{\tiny
\begin{align}\label{inq22}
\|x_{n+1}-x^*\|\leq \sqrt{1-\Big(2\lambda_n-\frac{2\lambda_n^2\alpha^2(\beta-l)}{2\alpha(\beta-l)-(L^2+l^2)}\Big)\Big(\frac{2\alpha(\beta-l)-L^2-l^2}{2\alpha}\Big)}\|x_n-x^*\|.
\end{align}}

Define for every $n\in\mathbb{N}$,
\begin{align*}
 Q(\alpha, \lambda_n) := \sqrt{1-\Big(2\lambda_n-\frac{2\lambda_n^2\alpha^2(\beta-l)}{2\alpha(\beta-l)-(L^2+l^2)}\Big)\Big(\frac{2\alpha(\beta-l)-L^2-l^2}{2\alpha}\Big)}.
\end{align*}

Then (\ref{inq22}) can be rewritten as
\begin{align}\label{inq23}
\|x_{n+1}-x^*\|\leq Q(\alpha, \lambda_n)\|x_n-x^*\|.
\end{align}

Finally, we get
\begin{align}\label{inq24}
\|x_{n+1}-x^*\|&\leq Q(\alpha, \lambda_n)\|x_n-x^*\|\\
&\vdots\nonumber\\
&\leq Q^n(\alpha, \lambda_n)\|x_0-x^*\|\nonumber.
\end{align}

Let $C_1=\frac{2\alpha(\beta-l)-(L^2+l^2)}{\alpha}$ and $C_2=\alpha(\beta-l)$. Then we have
\begin{align}\label{inQ}
 Q(\alpha, \lambda_n)^2=1+\lambda_n^2 C_2-\lambda_n C_1.
\end{align}

Using facts (\ref{Asump:trick1}), (\ref{Asump:trick2}) and (\ref{Asump:trick3}), we get
\begin{align}
Q(\alpha, \lambda_n)^2&
<1+B^2 C_2-A C_1=r<1\nonumber.
\end{align}

Again, from (\ref{inq24}) we infer that
\begin{align}\label{inq25}
0\leq\|x_{n+1}-x^*\|< r^{n/2}\|x_0-x^*\|\quad\forall n\in\mathbb{N}.
\end{align}

Therefore, from (\ref{inq25}) we conclude that the sequence $\{x_n\}$ is bounded and furthermore,
\begin{align*}
\|x_{n+1}-x^*\|\rightarrow 0\text{ as } n\rightarrow\infty.
\end{align*}
That is, the sequence $\{x_n\}$ generated by the algorithm \ref{mainin1} converges strongly to $x^*$, as asserted.
\end{proof}

\begin{remark}\label{He:rem}
In particular, if we take $\lambda_n=\frac{1}{\alpha}$ for every $n\in\mathbb{N}$ and $\Phi(x)=\Omega$ (that is, $s(x)=0$) for every $x\in\mathbb{R}^n$, then (\ref{Sub:mainin1}) reduces to the algorithm
\begin{align}\label{He:rem:algo}
x_{n+1}=x_n+\frac{1}{\alpha}\{P_{\Omega}(f(x_n)-\alpha x_n)-f(x_n)\},
\end{align}
which was studied by He et al. \cite{XHE2011}.

In this case, the assumptions (\ref{Asump:trick1}), (\ref{Asump:trick2}) and (\ref{Asump:trick3}) reduce to $\alpha>\frac{L^2}{\beta}$ and the sequence $\{x_n\}$ satisfies the following inequality:
\begin{align}\label{inq26}
\|x_{n+1}-x^*\|\leq\sqrt{\Big(1-\frac{\alpha\beta-L^2}{\alpha^2}\Big)}\|x_n-x^*\|,
\end{align}
where $\beta$ and $L$ are the strong monotonicity and Lipschitz constants, respectively, of the function $f$.
Therefore, from (\ref{inq26}) it is clear that the iterative sequence $\{x_n\}$ generated in (\ref{He:rem:algo}) converges to $x^*$ linearly.
\end{remark}

\begin{remark}
In this section we have considered a more general gradient projection method. We have proved that the sequence generated by algorithm \ref{Sub:mainin1} converges to the unique solution of the IQVIP (\ref{IQVIP:P}) under the strong monotonicity and Lipschitz continuity assumptions on the single-valued mapping $f$. Furthermore, in Remark \ref{He:rem} we have shown that our gradient projection method reduces to the method of \cite{XHE2011} under certain constraint qualifications and that the sequence $\{x_n\}$ generated in (\ref{He:rem:algo}) satisfies the error bound in (\ref{inq26}). Consequently, if  $\alpha>\frac{L^2}{\beta}$, then the sequence $\{x_n\}$ generated in (\ref{He:rem:algo}) converges to the unique solution $x^*$ linearly.
\end{remark}

\section{An example}
\label{Sec:7}

\begin{example}
Let $\Omega=B[0, 1]\subset\mathbb{R}^3$, the closed unit ball centered at the origin. Consider the functions $f(x)=2x$ and $s(x)=x/4$ from $\mathbb{R}^3$ into itself. Then it is not difficult to check that $f$ is $L$-Lipschitz continuous and $\beta$-strongly monotone, where $L=2$ and $\beta=2$. Let $\alpha=2$ and $\lambda(t)=1+t^3$, where $t\geq 0$. Then we have $\lambda(t)\in C([0,+\infty))$ and
\begin{align}
\int_{t_0=0}^\infty \lambda(t)dt=+\infty\nonumber.
\end{align}

Let $\Phi(x)=s(x)+\Omega$. Since $s(x)$ is $1/4$-Lipschitz continuous, we have
\begin{align*}
\|P_{\Phi(x)}(z)-P_{\Phi(y)}(z)\| \leq\frac{1}{4}\|x-y\|\quad\forall x, y, z\in\mathbb{R}^n.
\end{align*}
Therefore $\kappa=1/4$.

It can be verified that the above parameters satisfy the following conditions:
\begin{align}
&1+2\kappa-2\beta+\alpha^2+L^2-2\alpha\beta<0\quad\text{and}\nonumber\\&
 L^2-2\alpha(\beta-\kappa)<\kappa^2\nonumber.
\end{align}

Using Theorem \ref{Thm:IQVIP}, it not difficult to check that $(0, 0, 0)$ is the unique equilibrium point for the neural network (\ref{IQVIP}), that is, $(0, 0, 0)$ is the unique solution of the IQVIP (\ref{IQVIP:P}). According to Theorem \ref{Result4}, the neural network is globally asymptotically and exponentially stable at $(0, 0, 0)$. The graph below shows that the trajectories of (\ref{IQVIP}) globally converge to the optimal solution $(0, 0, 0)$ with different starting points. Furthermore, we see that the corresponding neural network converges at a faster rate.
\begin{center}
\includegraphics[width=11cm,height=7.5cm]{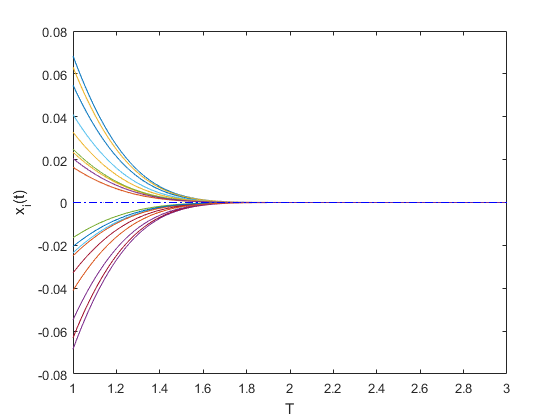}
\end{center}
\hspace{2.3cm}
{\small {\bf Fig. 1} Transient behavior of the neural network \eqref{IQVIP}}.\\
\end{example}

\section{conclusion}
\label{Sec:8}
This paper presents a recurrent neural network model for solving inverse quasi-variational
inequality problems. This model possesses a simple single-layer structure and low complexity
for implementation. Using the Lyapunov theory functional differential equations, we have established, under certain conditions,
the existence of the solution to the proposed network, as well as its asymptotic stability exponential stability. Also, we have proved that the sequence generated by the discretization of the network (\ref{IQVIP}) converges to the solution of the IQVIP (\ref{IQVIP:P}) under certain assumptions on the parameters involved. Finally, we have provided a numerical example to illustrate our theoretical analysis.

\begin{remark}
Our rather technical proof of Theorem \ref{Dis:Thm} leads us to ask the following question:

Does Theorem \ref{Dis:Thm} hold for more general set-valued mappings?

This question remains open.
\end{remark}


\noindent
 {\bf Acknowledgments:} The first author gratefully acknowledges the financial support of the Post-Doctoral Program at the Technion - Israel Institute of Technology. 
The second author was partially supported by the Israel Science Foundation (Grant No. 820/17),
by the Fund for the Promotion of Research at the Technion and by the Technion General Research Fund.

\end{document}